\title [$L^p$-estimates of the Boltzmann Equation]
{$L^p$-estimates of the Botlzmann Equation around a traveling local Maxwellian}
\author{Seok-Bae Yun}
\address{Department of Mathematical Sciences, KAIST (Korea Institute of Science and Technology)
373-1 Guseong-dong, Yuseong-gu Daejeon, 305-701, South Korea}
\email{sbyun01@gmail.com}
\begin{document}

\subjclass[2010] {82C40, 35Q20, 35B35, 35B30}
\keywords{Boltzmann equation, $L^p$-stability, $L^p$-estimate, travelling local Maxwellian}
\newtheorem{theorem}{Theorem}[section]
\newtheorem{lemma}{Lemma}[section]
\newtheorem{corollary}{Corollary}[section]
\newtheorem{proposition}{Proposition}[section]
\newtheorem{remark}{Remark}[section]
\newtheorem{definition}{Definition}[section]

\renewcommand{\theequation}{\thesection.\arabic{equation}}
\renewcommand{\thetheorem}{\thesection.\arabic{theorem}}
\renewcommand{\thelemma}{\thesection.\arabic{lemma}}
\newcommand{\bbr}{\mathbb R}
\newcommand{\bbt}{\mathbb T}
\newcommand{\bbs}{\mathbb S}
\newcommand{\bbz}{\mathbb Z}
\newcommand{\bn}{\bf n}

\def\charf {\mbox{{\text 1}\kern-.24em {\text l}}}

\begin{abstract}
In this paper, we are interested in the $L^p$-estimates
of the Boltzmann equation in the case that the distribution function
stays around a travelling local Maxwellian. For this, we divide
both sides of the Boltzmann equation by the velocity distribution function
with a fractional exponent and reformulate the Boltzmann equation into a
regularized one. This amounts to endowing additional integrability
on the collision kernel, which in turn enables us to apply simple H\"{o}lder type
inequalities. Our results cover the whole range of Lebesgue exponents: $0<p\leq \infty$.
\end{abstract} \maketitle


\section{Introduction}
In the kinetic theory of gases, it is postulated that all the relevant information
is encoded in a velocity distribution function $f(x,v,t)$ representing the number density
of particles located at position $x$ with velocity $v$  at time $t$.
For non-ionized monatomic rarefied gas, the time
evolution of $f$ is governed by the celebrated Boltzmann equation:
\begin{align}
\begin{aligned}
&\partial_tf + v \cdot \nabla_x f= Q(f,f),
\quad (x,v,t) \in \bbr^3 \times \bbr^3 \times \bbr_+. \\
\end{aligned}\label{main}
\end{align}
The left hand side of (\ref{main}) describes the free transport of non-interacting particles, whereas
the collision operator $Q(f,f)$ captures collisions or interaction between particles. It can be written
down explicitly as follows:
\begin{equation}
Q(f,f)(v) \equiv \frac{1}{\kappa} \int_{\bbr^3 \times \bbs_+^2} B(v-v_*, \omega) ( f^{\prime} f_*^{\prime} - ff_* )
d\omega dv_*. \label{Q}
\end{equation}
Here $\kappa$ is the Knudsen number which is the ratio between the
mean free path of molecules and the characteristic length of the
flow and $\displaystyle\bbs^2_+=\{\omega\in \bbs^2 ~|~ (v-v_*)\cdot\omega\geq 0\}$. For the simplicity of presentation, we adopt the
following handy notations:
\[
f^{\prime} \equiv f(x,v^{\prime},t), \quad f_*^{\prime}\equiv f(x,v_*^{\prime},t), \quad f \equiv f(x,v,t)
\quad \mbox{ and } \quad f_* \equiv f(x,v_*,t),
\]
where the pair $(v^{\prime}, v_*^{\prime})$ denotes the post-collisional
velocities which can be calculated explicitly from the pre-collisional pair
of velocities $(v,v_*)$ by
\begin{equation}
v^{\prime} = v - [(v -v_*) \cdot \omega]  \omega \quad
\mbox{ and } \quad v_*^{\prime} = v_* +  [(v - v_*) \cdot
\omega ] \omega. \label{coll}
\end{equation}
The collision kernel $B(v-v_*,\omega)$ is determined by types of interaction between gas particles.
For the precise form and relevant structural assumptions imposed on the collision kernel, see $({\mathcal A 1})$ below.
For more detailed survey of mathematical and physical results of the Boltzmann equation, we refer to
\cite{B-P-T, C, C-I-P, So1, So2, Vi}.

In this paper, we study the stability problem of the
Boltzmann equation in $L^p$ spaces when the velocity distribution function is bounded
from above and below by a travelling local Maxwellian:
\begin{equation}
a_{m}{\mathcal M}_{\alpha,\beta}(x,v)\leq f^{\sharp}(x,v,t)\leq a_{M}{\mathcal M}_{\alpha,\beta}(x,v),
\end{equation}
where $a_{m}$, $a_{M}$ denotes positive constants and ${\mathcal M}_{\alpha, \beta}(x,v)$ is a travelling local
Maxwellian solution:
\begin{equation}
{\mathcal M}_{\alpha,\beta}(x,v) \equiv
e^{-\alpha|x|^2-\beta|v|^2}  \quad \mbox{ for positive
constants }~~\alpha, \beta > 0. \label{local}
\end{equation}
For the stability problem of kinetic equations, $L^1$ space is the most natural setting in that
it corresponds to the total mass of the system. The study of stability in $L^1$ space for the
Boltzmann equation near vacuum was initiated by Ha
\cite{H1, H2} who introduced a nonlinear functional approach motivated by the stability
theory of hyperbolic conservation laws, and was studied extensively by Ha and his coworkers \cite{C-H, D-Y-Z, H-N-Y, H-Y}.
See also \cite{Ar, Lu}. 
It is then quite natural to ask whether the stability results in $L^1$ can be extended to general $L^p$ space.
Considering that the asymptotic behavior of
the Boltzmann equation in this regime is largely governed by the free transport equation:
\[
\frac{\partial f}{\partial t}+v\cdot \nabla f=0,
\]
for which the uniform $L^p$ stability estimate trivially holds, it is reasonable
to expect similar estimates to hold true for general $L^p$ spaces. In this vein, there have been several
results on the $L^p$-stability estimates of the Boltzmann equation near vacuum.
In \cite{H-Y-Y2}, Ha's nonlinear functional approach was extended to summational $L^p$ setting.
Then the Gronwall type argument also became available in \cite{H-L-Y} to obtain
weighted $L^p$-stability estimates.
Recently, Alonso et al. \cite{A-G} resolved the uniform $L^p$ stability problem for the Boltzmann
equation with soft potential in the affirmative.

The usual difficulty encountered in the study of $L^p$ type estimates of the collision operator is that
even the simple H\"{o}lder inequality cannot be directly applied due to the singularity of the collision kernel.
In \cite{H-L-Y}, this difficulty was overcome by introducing polynomial weights in the velocity fields. In this
paper, we attack this problem by dividing both sides of (\ref{main}) by $\frac{1}{\mu}f^{1-\mu}$ and
reformulating the the Boltzmann equation into the following form (See (\ref{divide})):
\[
\frac{\partial f^{\mu}}{\partial t}+v\cdot\nabla f^{\mu}=Q_{\mu}(f^{\mu},f^{\mu}).
\]
In this way, the reformulated collision operator $Q_{\mu}$ gains additional integrability, and we are now able
to apply H\"{o}lder type inequalities to obtain the following $L^p$-estimate:
\[
\|f^{\mu}\|_p\leq C_{\mu,p}\hspace{0.04cm}\|f^{\mu}\|_p,
\]
which, upon adjusting the value of $\mu$ and $p$ properly, leads to the main results. (See Theorem 1.1 and 1.2
below.) We mention that the parameter $\mu$ provides greater degree of freedom in determining the Lebesgue exponent,
which is a key element in obtaining $L^p$ estimates for $0<p<1$.
Before we state our assumptions and main results, we introduce the notion of
mild solutions.
\begin{definition}We say that a nonnegative function $f\in L^{\infty}(0,T; L^1(\bbr^3\times\bbr^3))$ is
a mild solution if it satisfies the mild form:
\begin{equation}
\displaystyle  f^{\sharp}(x,v,t) = f_0(x,v) + \int_0^t
Q^{\sharp}(f,f)(x,v,s)ds ,\qquad (x,v,t) \in \bbr^3 \times
\bbr^3 \times \bbr_+, \label{mild}
\end{equation}
where the operator $\sharp$ is defined by
\[
f^{\sharp}(x,v,t) \equiv f(x+ tv, v,t)
\]
\end{definition}
The global existence of mild solutions for the Boltzmann equation in infinite vacuum was first
established by Illner shinbrot \cite{I-S} in the case that the solution decays exponentially in phase
space by combining fixed point arguments with the celebrated Kaniel Shinbrot scheme \cite{K-S}.
Their result was then extended to more general settings including algebraically decaying
data by several authors \cite{Bel,To2,To3}. In \cite{P-T, To1} the smallness assumption imposed on
the upper travelling Maxwellian bound was replaced by a closedness condition to resolve the
Cauchy problem of the Boltzmann equation close to a local Maxwellian regime, which
is relevant to our case. We remark, however, that our stability analysis in this paper does not require
any closedness nor smallness restrictions on the solutions.
%
%
%
%
The main structural assumptions of this paper are as follows.
\begin{itemize}
\item $({\mathcal A1})$.~~The collision kernel satisfies an inverse
power potential and an angular cut-off assumption:
\[
B(v - v_*, \omega) = |v - v_*|^{\gamma} b_{\gamma}(\theta),\quad
-3 < \gamma \leq 1,
\]
and
\[
 \int_{\bbs^2_+}b_{\gamma}(\theta)d\omega= B_{\gamma} < \infty,
\]
where $\theta$ is the angle between $v -v_*$ and $\omega$.
\item $({\mathcal A2}).$ Mild solution $f$ satisfies
\[ \displaystyle a_{m}{\mathcal M}_{\alpha,\beta}(x,v)\leq
f^{\sharp}(x,v,t)  \leq a_{M}{\mathcal M}_{\alpha,\beta}(x,v), \quad
\mbox{ a.e.} \hspace{0.3cm} (x,v), \] for some strictly positive
constants $a_{M}$, $a_{m}$.
\end{itemize}
\begin{remark}
1. The existence of mild solution satisfying $(\mathcal{A}2)$ with
additional condition that $a_{M}-a_{m}$ is sufficiently small was established in \cite{P-T,To1}.
Recently, this result was extended to the classical solutions for soft potentials in \cite{A-G}.
\end{remark}
We are now in a position to state our main results. Below $G_p$ denotes constants
which depend on the Lebesgue exponent $p$, but not on $x,v$ and $t$.
%
%
\begin{theorem}\label{SoftPotentialTheorem}
Suppose that main assumption $({\mathcal A}1)$ holds with $-3<
\gamma\leq 1$ and let $f$ be a mild solution of
\eqref{main} satisfying $({\mathcal A}2)$
corresponding to an initial datum $f_0$. Then we have
\begin{eqnarray}
||f(t)||_{L^p} \leq G_p ||f_0||_{L^p}, \qquad0< p\leq \infty.\label{LpEstimateSoft}
\end{eqnarray}
\end{theorem}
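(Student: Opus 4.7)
My plan is to follow the scheme outlined in the introduction: reformulate \eqref{main} as an equation for $f^\mu$, integrate along the free-transport characteristics to get a mild identity, take $L^p$ norms, and close the estimate by means of the Maxwellian pinching $(\mathcal{A}2)$. The strict positivity of $f$ from the lower bound of $(\mathcal{A}2)$ lets me multiply \eqref{main} by $\mu f^{\mu-1}$ to obtain
\begin{equation*}
\partial_t f^\mu+v\cdot\nabla_x f^\mu=\mu f^{\mu-1}Q(f,f)=:Q_\mu(f,f),
\end{equation*}
whose mild form along characteristics is $(f^\mu)^\sharp(x,v,t)=f_0^\mu(x,v)+\int_0^t Q_\mu^\sharp(f,f)(x,v,s)\,ds$. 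Since $(x,v)\mapsto(x+tv,v)$ is measure preserving, the operation $\sharp$ is an $L^p$ isometry, so for $p\geq 1$ the triangle inequality produces
\begin{equation*}
\|f^\mu(t)\|_{L^p}\leq\|f_0^\mu\|_{L^p}+\int_0^t\|Q_\mu(f,f)(s)\|_{L^p}\,ds.
\end{equation*}

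The next step is a pointwise bound on $Q_\mu$. Split $Q=Q^+-Q^-$. The loss part gives $\mu f^{\mu-1}Q^-(f,f)=(\mu/\kappa)f^\mu\nu_f$ with $\nu_f(x,v,s):=\int B(v-v_*,\omega)f(x,v_*,s)\,d\omega dv_*$. Inserting the upper bound $f(x,v_*,s)\leq a_M e^{-\alpha|x-sv_*|^2-\beta|v_*|^2}$ and changing variables via $w=x-sv_*$ (Jacobian $s^{-3}$) yields a dispersive estimate $\nu_f(x,v,s)\leq\eta(s,v)$ with $\eta(\cdot,v)$ bounded near $s=0$, of order $s^{-3}$ at infinity, hence in $L^1(\bbr_+)$ uniformly on compact $v$-sets. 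For the gain part, momentum conservation $v+v_*=v'+v_*'$ gives the parallelogram identity
\begin{equation*}
|x-sv'|^2+|x-sv_*'|^2=|x-sv|^2+|x-sv_*|^2,
\end{equation*}
which together with the energy identity $|v'|^2+|v_*'|^2=|v|^2+|v_*|^2$ turns the upper Maxwellian bound applied to $f(x,v',s)f(x,v_*',s)$ into the pre-collisional product $a_M^2 e^{-\alpha(|x-sv|^2+|x-sv_*|^2)-\beta(|v|^2+|v_*|^2)}$, so the same dispersive change of variables governs $Q^+$. The external factor $f^{\mu-1}(x,v,s)$ is then absorbed via the lower bound in $(\mathcal{A}2)$, which supplies $f^{\mu-1}(x,v,s)\,e^{-\alpha|x-sv|^2-\beta|v|^2}\leq a_m^{-1}f^\mu(x,v,s)$. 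Combining,
\begin{equation*}
|Q_\mu(f,f)(x,v,s)|\leq C\,\eta(s,v)\,f^\mu(x,v,s).
\end{equation*}

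To pass from this pointwise bound to $L^p$, the upper Maxwellian bound on $f$ lets me trade a small fraction of the Gaussian decay of $f^\mu$ in $v$ against any polynomial dependence of $\eta$ on $v$, leaving $\|Q_\mu(s)\|_{L^p}\leq C(s)\|f^\mu(s)\|_{L^p}$ with $C\in L^1(\bbr_+)$. Gronwall then yields
\begin{equation*}
\|f^\mu(t)\|_{L^p}\leq\exp\!\Bigl(\int_0^\infty C(s)\,ds\Bigr)\|f_0^\mu\|_{L^p}
\end{equation*}
uniformly in $t\geq 0$. Finally, using $\|f^\mu\|_{L^p}=\|f\|_{L^{\mu p}}^\mu$, for any target exponent $q\in(0,\infty)$ I pick $(\mu,p)$ with $\mu p=q$ and $p\geq 1$ (for instance $\mu=1$, $p=q$ when $q\geq 1$, and $\mu=q$, $p=1$ when $0<q<1$), which gives \eqref{LpEstimateSoft}; the case $q=\infty$ is handled analogously by using a supremum in place of the $L^p$ norm throughout.

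The principal obstacle is the time-uniformity of the constant. A naive estimate giving $\|Q_\mu(s)\|_{L^p}\leq C\|f^\mu(s)\|_{L^p}$ with a time-independent $C$ would only produce $e^{Ct}$ growth, whereas $G_p$ in the statement does not depend on $t$; extracting the dispersive $s^{-3}$ decay from the Maxwellian-pinched collision integrals is what allows the Gronwall to close uniformly in time, and rests essentially on the spatial Gaussian factor $e^{-\alpha|x-sv_*|^2}$ supplied by $(\mathcal{A}2)$. A secondary subtlety is the sub-unit range $0<p<1$, where the $L^p$ triangle inequality fails; the free parameter $\mu$ is exactly what resolves this, because the intermediate work is always carried out at $p\geq 1$ by taking $\mu=q$ small whenever $q<1$.
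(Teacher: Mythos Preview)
Your route is genuinely different from the paper's and, for soft potentials, actually more elementary. Where the paper keeps the post-collisional factor $(f'^{\sharp}f_*'^{\sharp})^{\mu}$ inside the integral and splits via H\"older into two pieces $\mathcal N_{1A},\mathcal N_{1B}$ before using the dispersive Lemma~2.2 on each, you collapse everything to a pointwise bound $|Q_\mu^{\sharp}|\le C\,\eta(s)(f^{\sharp})^{\mu}$ and run Gronwall directly. For $-2<\gamma\le 0$ this works: in $\sharp$-coordinates Lemma~2.2 gives $\eta(s)=C(1+s)^{-(3+\gamma)}$ \emph{uniformly in $(x,v)$}, so Minkowski plus Gronwall closes cleanly. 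One correction: your claimed decay $s^{-3}$ is too optimistic for $\gamma<0$; the rate is $s^{-(3+\gamma)}$, which is still in $L^1(\bbr_+)$ precisely when $\gamma>-2$ (the paper's Gronwall argument has the same restriction, despite the section heading).

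The gap is in the hard-potential step. For $0<\gamma\le 1$ the dispersive bound on the collision frequency necessarily carries a factor $(1+|v|^{\gamma})$ (or equivalently $(1+|x|^{\gamma})$ in $\sharp$-variables); it is not uniform. Your proposed fix, ``trade a small fraction of the Gaussian decay of $f^{\mu}$ in $v$,'' would replace $(1+|v|^{\gamma})(f^{\sharp})^{\mu}$ by $C(f^{\sharp})^{\mu-\varepsilon}$, and then the Gronwall inequality no longer closes in the same norm $\|(f^{\sharp})^{\mu}\|_{p}$. The only way to rescue it using just the upper bound is to invoke the lower bound as well, i.e.\ $\|(1+|v|^{\gamma})(f^{\sharp})^{\mu}\|_{p}\le a_M^{\mu}\|(1+|v|^{\gamma})\mathcal M^{\mu}\|_{p}\le C\,a_m^{-\mu}\|(f^{\sharp})^{\mu}\|_{p}$, but at that point you are essentially using the trivial comparability $\|f(t)\|_p\asymp\|\mathcal M\|_p$ that $(\mathcal A2)$ gives for free. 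The paper avoids this entirely by switching strategy for hard potentials: it takes $\sup_t(f^{\sharp})^{\mu}$, applies the $L^p$ norm, uses H\"older with the \emph{time integral performed first} (Lemma~2.1 turns $|v-v_*|^{\gamma}$ into $|v-v_*|^{\gamma-1}$, now integrable against the Gaussian for $\gamma>-2$), and obtains a self-referential inequality $\|\sup_t(f^{\sharp})^{\mu}\|_p\le \|f_0^{\mu}\|_p+\bar C_{\mu,p}\|\sup_t(f^{\sharp})^{\mu}\|_p$ with $\bar C_{\mu,p}=O(\mu)$; choosing $\mu$ small (with $\mu p=P$ fixed) then closes by contraction rather than Gronwall. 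That interchange of time and velocity integration, together with the smallness of $\mu$, is the missing idea in your hard-potential sketch.
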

\begin{remark}
1. Alonzo et al.\cite{A-G} has resolved $L^p$-stability problem of the Boltzmann equation
with soft potentials for spatially decaying solutions. Our result is weaker in the sense that we cannot consider the difference of
the two distribution functions: $f-\bar f$, but stronger in that it covers the hard potential case and
the whole range of exponent: $0<p\leq\infty$.\newline
2. 
We do not impose any smallness condition neither on
$a_{M}$ nor on $a_{M}-a_{m}$. Although the existence result was established only when the distribution functions
lie close to a local Maxwellian regime in the sense that $a_M-a_m$ is sufficiently small.
\end{remark}
The rest of this paper is organized as follows. In section 2,
we present several estimates which will be crucial for the later
sections.
Through section 3 to section 4, we prove our main
results. In the last section, we consider the stability problem of the
difference of two distribution functions.
%
%
\section{Preliminaries}
\subsection{Basic estimates}
In this part, we present several estimates to be used in later
sections. For the proof, we refer readers to \cite{H1, H-L-Y, H-Y, Lu}.
%
%
\begin{lemma}\label{Lemma1}
Let $x \in \bbr^3$,  $V \not = 0$ and $a>0$. Then we have
\begin{eqnarray*}
\int_0^{\infty} e^{-a|x + \tau V|^2}d\tau  &\leq&
\sqrt{\frac{\pi}{a}} \frac{1}{|V|}.
\end{eqnarray*}
\end{lemma}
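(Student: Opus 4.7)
The plan is to reduce the three-dimensional Gaussian integral to a one-dimensional one by decomposing $x$ along and perpendicular to $V$, then to bound the resulting line integral by the full Gaussian integral over $\mathbb{R}$.

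First I would set $\hat V = V/|V|$ and write $x = x_\parallel + x_\perp$, where $x_\parallel = (x\cdot\hat V)\hat V$ is the component of $x$ along $V$ and $x_\perp$ is orthogonal to $V$. Since $\tau V$ lies along $\hat V$, the Pythagorean identity gives
\[
|x + \tau V|^2 = \bigl(x\cdot\hat V + \tau|V|\bigr)^2 + |x_\perp|^2.
\]
Dropping the nonnegative term $|x_\perp|^2$ in the exponent (after the minus sign, this produces an upper bound on $e^{-a|x+\tau V|^2}$) yields
\[
\int_0^\infty e^{-a|x+\tau V|^2}\,d\tau \;\leq\; \int_0^\infty e^{-a(x\cdot\hat V + \tau|V|)^2}\,d\tau.
\]

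Next I would make the linear substitution $u = x\cdot\hat V + \tau|V|$, valid since $|V|\neq 0$, so that $du = |V|\,d\tau$. The integral becomes
\[
\frac{1}{|V|}\int_{x\cdot\hat V}^{\infty} e^{-au^2}\,du \;\leq\; \frac{1}{|V|}\int_{-\infty}^{\infty} e^{-au^2}\,du \;=\; \frac{1}{|V|}\sqrt{\frac{\pi}{a}},
\]
where the last equality is the standard Gaussian integral. Combining these bounds gives the stated inequality.

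There is no real obstacle here; the only thing to watch is that one is allowed to discard the perpendicular component (legitimate because the exponent gets larger, i.e. the exponential gets smaller) and that one extends the lower limit of integration from $x\cdot\hat V$ to $-\infty$ to arrive at a bound that is independent of $x$.
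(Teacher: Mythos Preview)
Your argument is correct and is precisely the standard proof of this estimate: decompose $x$ into components parallel and perpendicular to $V$, discard the perpendicular part, change variables, and extend the domain to all of $\bbr$. The paper itself does not give a proof of this lemma but simply refers to \cite{H1, H-L-Y, H-Y, Lu}, where the same computation appears; so there is nothing to compare against and your proposal is fine as written.
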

%
%
\begin{lemma}\label{Lemma2}
For $-3<\gamma\leq 0$, we have
\begin{eqnarray*}
&& \displaystyle  \int_{\bbr^3 \times \bbs_+^2} B(v-v_*,\omega)
{\mathcal M}_{\alpha,\beta}(x+t(v-v_{*}),v_*) d\omega dv_*\cr
&& \hspace{2cm}\leq C(\gamma,\alpha,\beta)\cdot\frac{1}{(t+ 1)^{\gamma+3}}, \\
\end{eqnarray*}
where $\displaystyle C(\gamma,\alpha,\beta)=B_{\gamma}\Big[
\frac{2\pi}{\gamma+3}+\sqrt{\Big(\frac{\pi}{\alpha}\Big)^3}+\sqrt{\Big(\frac{\pi}{\beta}\Big)^3}~\Big].$
\end{lemma}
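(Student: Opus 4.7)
The plan is to reduce the stated triple integral to an integral over $v_*$ by first performing the angular integration, then to handle the spatial decay through a dichotomy based on the size of the relative velocity $v-v_*$ compared to $1/(t+1)$.

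First, since the angular weight $b_\gamma(\theta)$ depends only on the angle between $v-v_*$ and $\omega$, and $\int_{\bbs^2_+} b_\gamma(\theta)\, d\omega = B_\gamma$ by assumption $(\mathcal A1)$, the $\omega$-integration peels off cleanly and leaves
\[
I \;=\; B_\gamma \int_{\bbr^3} |v-v_*|^{\gamma}\, e^{-\alpha|x+t(v-v_*)|^2 - \beta|v_*|^2}\, dv_*.
\]
The natural substitution $u = v-v_*$ rewrites this as $B_\gamma \int_{\bbr^3} |u|^\gamma e^{-\alpha|x+tu|^2} e^{-\beta|v-u|^2} du$, in which $(x,v)$ now appear only as parameters inside the two independent Gaussian factors.

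Next I would split the $u$-integral into the near-singular region $\{|u| \leq (t+1)^{-1}\}$ and its complement. On the first region both exponentials are bounded by $1$, and since $\gamma > -3$ the polar computation gives
\[
\int_{|u| \leq (t+1)^{-1}} |u|^\gamma\, du \;=\; \frac{4\pi}{\gamma+3} \frac{1}{(t+1)^{\gamma+3}},
\]
which is the source of the $\tfrac{2\pi}{\gamma+3}$-type term. On the complement I use $\gamma \leq 0$ to get $|u|^\gamma \leq (t+1)^{-\gamma}$, reducing matters to bounding the pure Gaussian integral $\int_{\bbr^3} e^{-\alpha|x+tu|^2 - \beta|v-u|^2}\, du$ by something comparable to $(t+1)^{-3}$.

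For this Gaussian integral I drop one factor at a time and estimate separately: dropping $e^{-\beta|v-u|^2}$ and changing variables $w = x+tu$ yields the bound $\sqrt{(\pi/\alpha)^3}\,/t^3$, valid for $t>0$, while dropping $e^{-\alpha|x+tu|^2}$ gives the bound $\sqrt{(\pi/\beta)^3}$, valid uniformly. Splitting into the regimes $t \leq 1$ and $t \geq 1$ and using the comparison $(t+1)^3 \leq 8\max(1, t^3)$ in the appropriate regime converts these into a combined estimate of the form $[\sqrt{(\pi/\alpha)^3} + \sqrt{(\pi/\beta)^3}]\,(t+1)^{-3}$ (up to an innocuous numerical constant), and multiplying by the $(t+1)^{-\gamma}$ factor produced in the previous step gives the right $(t+1)^{-(\gamma+3)}$ decay.

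The main obstacle is this last step: the two trivial Gaussian bounds behave very differently as $t \to 0$ and $t\to \infty$, and combining them into a single bound of the form $C/(t+1)^3$ with the additive structure $\sqrt{(\pi/\alpha)^3}+\sqrt{(\pi/\beta)^3}$ (rather than something like $\min$ or a product) requires the explicit $t\lessgtr 1$ dichotomy. Everything else is essentially a computation using only $\gamma > -3$ (for integrability near $u=0$) and $\gamma \leq 0$ (so that $|u|^\gamma$ is bounded by its value at $|u|=(t+1)^{-1}$ away from the origin).
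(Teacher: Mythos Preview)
The paper does not actually prove this lemma; in Section~2 it simply states the result and refers the reader to \cite{H1, H-L-Y, H-Y, Lu}. So there is no in-paper proof to compare against.

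Your argument is correct and is in fact the standard one appearing in those references: separate the angular integral to extract $B_\gamma$, then split the $v_*$-integral at $|v-v_*|=(t+1)^{-1}$, treat the inner ball by direct polar integration (using $\gamma>-3$), and on the outer region replace $|v-v_*|^\gamma$ by $(t+1)^{-\gamma}$ (using $\gamma\le 0$) and bound the remaining Gaussian integral via the $t\le 1$ / $t\ge 1$ dichotomy you describe. One cosmetic point: your polar computation gives $\tfrac{4\pi}{\gamma+3}$ rather than $\tfrac{2\pi}{\gamma+3}$, and your Gaussian step produces an extra absolute factor (the $8$ from $(t+1)^3\le 8\max(1,t^3)$); so your constant is not literally the $C(\gamma,\alpha,\beta)$ displayed in the statement. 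This is harmless for every use of the lemma in the paper, where only the form $C\,(t+1)^{-(\gamma+3)}$ matters, but you should flag explicitly that the constant you obtain differs from the stated one by an absolute numerical factor.
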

\section{The proof of theorem 1.1 $(-3<\gamma\leq 0)$}\setcounter{equation}{0}
Let $f$ be a mild solution of the Boltzmann equation
satisfying the structural assumption $(\mathcal{A}2)$. We then have from (\ref{mild})
\begin{eqnarray}
\frac{\partial f^{\sharp}}{\partial t}=
\frac{1}{\kappa}Q^{\sharp}(f,f)\leq \frac{1}{\kappa}Q^{+\sharp}(f,f).\label{BE}
\end{eqnarray}
We divide both sides of (\ref{BE}) by
$\frac{1}{\mu}(f^{\sharp}_{\varepsilon})^{1-\mu}$ $(0<\mu<1)$ to get
\begin{align}
\begin{aligned}\label{divide}
\frac{\partial (f^{\sharp})^{\mu}}{\partial t}
&\leq\frac{1}{\kappa}\frac{\mu}{(f^{\sharp})^{1-\mu}}Q^+(f,f)\cr
&= \frac{\mu}{\kappa}\int_{\bbr^3\times \bbs_+^2}B(v-v_*,\omega)\Big(\frac{f^{\prime\sharp}
f^{\prime\sharp}_*}{f^{\sharp}}\Big)^{1-\mu}\big(f^{\prime\sharp}f^{\prime\sharp}_{*}\big)^{\mu}d\omega dv_*.
\end{aligned}
\end{align}
We observe from the lower and upper bound estimate of $({\mathcal
A}2)$
\begin{eqnarray*}
\frac{f^{\prime\sharp}f^{\prime\sharp}_{*}}{f^{\sharp}}&\leq&
\frac{a_{M}^2e^{-\alpha|x-t(v-v^{\prime})|^2-\beta|v^{\prime}_*|^2}
e^{-\alpha|x-t(v-v^{\prime}_*)|^2-\beta|v^{\prime}_*|^2}}{a_{m}e^{\alpha|x|^2-\beta|v|^2}}\\
&=&\frac{a_{M}^2e^{-\alpha|x|^2+\beta|v|^2}
e^{-\alpha|x-t(v-v_*)|^2-\beta|v_*|^2}}{a_{m}e^{-\alpha|x|^2-\beta|v|^2}}\\
&=& \frac{a_{M}^2}{a_{m}}e^{-\alpha|x+t(v-v_*)|^2-\beta|v|^2}.
\end{eqnarray*}
We substitute the above estimate into (\ref{divide}) to obtain
\begin{align}
\begin{aligned}
\frac{\partial (f^{\sharp})^{\mu}}{\partial t}&\leq\mu e^{\alpha}\big(\frac{a_{M}^2}{a_{m}}\big)^{1-\mu}\int_{\bbr^3
\times \bbs_+^2}A_{\mu,\alpha,\beta}(v-v_*)b(\theta)\big(f^{\prime\sharp}
f^{\prime\sharp}_{*}\big)^{\mu}d\omega dv_*,
\end{aligned}\label{dividedBE}
\end{align}
where $A_{\mu,\alpha,\beta}(v-v_*)$ denotes the regularized collision kernel defined by
\[
A_{\mu,\alpha,\beta}\equiv A_{\mu,\alpha,\beta}(v-v_*)\equiv |v-v_*|^{\gamma}e^{-(1-\mu)(\alpha|x-t(v-v_*)|^2+\beta|v_*|^2)}.
\]
Note that $A_{\mu,\alpha,\beta}$ now is an integrable function, which is a crucial ingredient in estimating
the reformulated collision operator in $L^p$.
We then multiply $p(f^{\sharp})^{\mu (p-1)}$ to
(\ref{dividedBE}) and integrate over
$\bbr^3\times \bbr^3\times \bbr_+$ with respect to $(x,v,t)$ to
obtain
\begin{align}
\begin{aligned}\label{LPBE}
\hspace{-0.7cm}\|(f^{\sharp})^{\mu}(t)\|^{p}_{p}
&\leq\|(f^{\sharp}_{0})^{\mu}\|^{p}_{p}\cr
&+p\mu e^{\alpha}\Big(\frac{a_M^2}{a_m}\Big)^{1-\mu}\!\!\int^{\infty}_{0}\!\!\!\int_{\bbr^3\times \bbs_+^2}
\!\!A_{\mu,\alpha,\beta}b(\theta)
\big(f^{\prime\sharp}f^{\prime\sharp}_{*}\big)^{\mu}(f^{\sharp})^{\mu(p-1)}
d\omega dv_*dvdxdt.
\end{aligned}
\end{align}
For brevity, we put
\begin{eqnarray*}
{\mathcal N}_1(t)\equiv\!\!\!\int_{\bbr^3\times \bbs_+^2}\!\!A_{\mu,\alpha,\beta}(v-v_*)b(\theta)
\big(f^{\prime\sharp}f^{\prime\sharp}_{*}\big)^{\mu}\big(f^{\sharp})^{\mu(p-1)}
d\omega dxdvdv_*.
\end{eqnarray*}
%
%
%
%
\begin{lemma}\label{MainLemmaforSoftPotential} Let $\gamma \in (-2, 0]$. Then for $q\geq 1$, ${\mathcal N}_1$ satisfies the following
pointwise estimate:
\begin{align}
\begin{aligned}
{\mathcal N}_1(t)\leq \frac{C_{{\mathcal
N}_1}(a_M)^{\mu }}{(t+1)^{3+\gamma}}
\|(f^{\sharp})^{\mu}(t)\|^p_p,
\end{aligned}\label{I_II}
\end{align}
for some constant $C_{{\mathcal N}_1}=C_{{\mathcal N}_1}(\mu,\alpha,\beta)$.
\end{lemma}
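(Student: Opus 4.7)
The plan is to isolate a factor of $(f^{\sharp})^{\mu}$ from $(f^{\prime\sharp}f_*^{\prime\sharp})^{\mu}$ so that the remaining $(\omega,v_*)$-kernel is integrable and amenable to Lemma~\ref{Lemma2}. I would first apply the Maxwellian upper bound from $(\mathcal{A}2)$ to both primed factors, obtaining
\begin{align*}
(f^{\prime\sharp}f_*^{\prime\sharp})^{\mu} \leq a_M^{2\mu}\,\exp\!\left\{-\mu\alpha\bigl[|x+t(v-v')|^2+|x+t(v-v_*')|^2\bigr]-\mu\beta\bigl[|v'|^2+|v_*'|^2\bigr]\right\}.
\end{align*}
The crucial observation is that conservation of kinetic energy yields $|v'|^2+|v_*'|^2=|v|^2+|v_*|^2$, while the orthogonality $v-v'\perp v-v_*'$ inherited from \eqref{coll} (combined with $(v-v')+(v-v_*')=v-v_*$) delivers the Pythagorean identity
\begin{align*}
|x+t(v-v')|^2+|x+t(v-v_*')|^2 = |x|^2+|x+t(v-v_*)|^2.
\end{align*}
This collapses the bound to
\begin{align*}
(f^{\prime\sharp}f_*^{\prime\sharp})^{\mu}\leq a_M^{2\mu}\,\mathcal{M}_{\alpha,\beta}^{\mu}(x,v)\,\mathcal{M}_{\mu\alpha,\mu\beta}(x+t(v-v_*),v_*).
\end{align*}

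Next, the Maxwellian lower bound $\mathcal{M}_{\alpha,\beta}(x,v)\leq f^{\sharp}/a_m$ of $(\mathcal{A}2)$ turns the first factor back into $(f^{\sharp})^{\mu}$, which combines with $(f^{\sharp})^{\mu(p-1)}$ in the integrand to produce the target power $(f^{\sharp})^{\mu p}$. Substituting back into $\mathcal{N}_1(t)$, the full $(\omega,v_*)$-kernel becomes
\begin{align*}
|v-v_*|^{\gamma}\,b(\theta)\,e^{-(1-\mu)\alpha|x-t(v-v_*)|^2}\,e^{-\mu\alpha|x+t(v-v_*)|^2}\,e^{-\beta|v_*|^2},
\end{align*}
where I have used $(1-\mu)\beta+\mu\beta=\beta$. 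Discarding the $e^{-(1-\mu)\alpha|x-t(v-v_*)|^2}\leq 1$ factor and applying Lemma~\ref{Lemma2} with $(\mu\alpha,\beta)$ in place of $(\alpha,\beta)$ bounds the $(\omega,v_*)$-integral by $C(\mu,\alpha,\beta)/(t+1)^{\gamma+3}$. The remaining $(x,v)$-integral of $(f^{\sharp})^{\mu p}$ is precisely $\|(f^{\sharp})^{\mu}(t)\|_p^p$, completing the argument.

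The main technical nuisance is the sign mismatch between $|x-t(v-v_*)|^2$ inside $A_{\mu,\alpha,\beta}$ and $|x+t(v-v_*)|^2$ produced by the Pythagorean identity above: the two Gaussians cannot be merged into a single quadratic. Because each factor is $\leq 1$, however, one can simply discard whichever is more convenient and preserve the $(t+1)^{-(\gamma+3)}$ decay via Lemma~\ref{Lemma2}. One bookkeeping remark: the derivation above naturally yields the prefactor $(a_M^2/a_m)^{\mu}$, so the excess ratio $(a_M/a_m)^{\mu}$ is to be absorbed into the constant $C_{\mathcal{N}_1}$ so that the stated form with the explicit factor $(a_M)^{\mu}$ is recovered.
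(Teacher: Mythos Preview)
Your argument is correct but follows a genuinely different route from the paper. The paper applies H\"older's inequality with exponents $\tfrac{p}{p-1}$ and $p$ to split $\mathcal{N}_1$ into a factor $\mathcal{N}_{1A}$ carrying the Gaussian weight $A_{\mu,\alpha,\beta}$ together with $(f^{\sharp})^{\mu p}$, and a factor $\mathcal{N}_{1B}$ carrying $(f^{\prime\sharp}f_*^{\prime\sharp})^{p\mu}$; the primed quantities in $\mathcal{N}_{1B}$ are then removed by the collisional change of variables $(v,v_*)\leftrightarrow(v',v_*')$, after which the upper Maxwellian bound and Lemma~\ref{Lemma2} are applied to each piece. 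By contrast, you never invoke H\"older or the change of variables: you use the Pythagorean identity (coming from $v-v'\perp v-v_*'$ and $|v'|^2+|v_*'|^2=|v|^2+|v_*|^2$) to collapse the primed Maxwellian bound into $\mathcal{M}^{\mu}_{\alpha,\beta}(x,v)$ times a travelling Maxwellian in $(x+t(v-v_*),v_*)$, and then convert $\mathcal{M}^{\mu}_{\alpha,\beta}(x,v)$ back into $(f^{\sharp})^{\mu}$ via the \emph{lower} bound in $(\mathcal{A}2)$. This is more elementary and in fact makes no use of the hypothesis $p\geq 1$; it also sidesteps the Jacobian computation for $(v',v_*')\to(v,v_*)$. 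The paper's H\"older route, on the other hand, needs only the upper bound at this stage and is the template that is reused in Section~4 for the hard-potential case.

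One small bookkeeping point: your constant naturally contains $(a_M^2/a_m)^{\mu}$ rather than $(a_M)^{\mu}$, so the $C_{\mathcal{N}_1}$ you produce depends on $a_m$ as well as $\mu,\alpha,\beta$. This is harmless for the downstream Gronwall argument (where $a_m$ already enters through $D_{\mu,p}$), but it does not literally match the dependence $C_{\mathcal{N}_1}=C_{\mathcal{N}_1}(\mu,\alpha,\beta)$ asserted in the lemma.
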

\begin{proof}
We apply H\"{o}lder inequality to ${\mathcal N}_1$ to obtain
\begin{align}
\begin{aligned}\label{Holder}
{\mathcal N}_1&\leq
\int_{\bbs^2_+}b(\theta)\Big(\underbrace{\int_{\bbr^9}|v-v_*|^{\gamma}
e^{-\frac{(1-\mu)p}{p-1}(\alpha|x+t(v-v_*)|^2+\beta|v_*|^2)}
(f^{\sharp})^{\mu p}dvdv_*dx}_{{\mathcal N}_{1A}}\Big)^{\frac{p-1}{p}}\cr
&\times\Big(\underbrace{\int_{\bbr^9}|v-v_*|^{\gamma}
\big(f^{\prime\sharp})^{p\mu}
(f^{\prime\sharp}_{*}\big)^{p\mu}dvdv_*dx}_{{\mathcal N}_{1B}}\Big)^{\frac{1}{p}}d\omega.
\end{aligned}
\end{align}
(i) The estimate of ${\mathcal N}_{1A}$: We observe from Lemma \ref{Lemma2}
\begin{align}
\begin{aligned}\label{N_1A}
{\mathcal N}_{1A}&\equiv\int_{\bbr^9}|v-v_*|^{\gamma}e^{-\frac{(1-\mu)p}{p-1}(\alpha|x+t(v-v_*)|^2+\beta|v_*|^2)}
(f^{\sharp}(x,v,t))^{\mu p}dvdv_* dx\cr
&=\int_{\bbr^6}(f^{\sharp}(x,v,t))^{\mu p}
\Big(\int_{\bbr^3} |v-v_*|^{\gamma}e^{-\frac{(1-\mu)p}{p-1}(\alpha|x+t(v-v_*)|^2+\beta |v_*|^2)}dv_*\Big)dxdv\cr
&\leq\frac{1}{(t+1)^{3+\gamma}}\Big[~\frac{2\pi}{\gamma+3}+\sqrt{\Big(\frac{\pi(p-1)}{\alpha (1-\mu)p}\Big)^3}
+\sqrt{\Big(\frac{\pi(p-1)}{\beta (1-\mu)p}\Big)^3}~\Big]\|\big(f^{\sharp}(t)^{\mu}\big)\|^p_p\cr
&\equiv\frac{C_{{\mathcal N}_{1A}}}{(t+1)^{3+\gamma}}\|\big(f^{\sharp}(t)\big)^{\mu}\|^p_p.
\end{aligned}
\end{align}
(ii) The estimate of ${\mathcal N}_{1B}$: Applying a series of standard changes of
variables, we have
\begin{eqnarray*}
{\mathcal N}_{1B}&=&\int_{\bbr^9}|v-v_*|^{\gamma}(f(x+tv,v^{\prime}))^{p\mu}
(f(x+tv,v^{\prime}_*))^{p\mu}dxdv dv_*\cr
&=&\int_{\bbr^9}|v-v_*|^{\gamma}(f(x,v^{\prime}))^{p\mu}(f(x,v^{\prime}_*))^{p\mu}dxdv dv_*\cr
&=&\int_{\bbr^9}|v-v_*|^{\gamma}(f(x,v))^{p\mu}(f(x,v_*))^{p\mu}dv dv_*dx\\
&=&\int_{\bbr^9}|v-v_*|^{\gamma}(f^{\sharp}(x,v))^{p\mu}(f^{\sharp}(x+t(v-v_*),v_*))^{p\mu}dxdvdv_*.
\end{eqnarray*}
We then use Lemma \ref{Lemma2} to see
\begin{align}
\begin{aligned}\label{N_1B}
{\mathcal N}_{1B}&=\int_{\bbr^6}(f^{\sharp}(x,v))^{p\mu}
\Big(\int_{\bbr^3}|v-v_*|^{\gamma}(f^{\sharp}(x+t(v-v_*),v_*))^{p\mu}dv_*\Big)dv dx\cr
&\leq (a_M)^{p\mu}\int_{\bbr^6}\big((f^{\sharp}(x,v))^{p\mu}\big)
\Big(\int_{\bbr^3}|v-v_*|^{\gamma}e^{-p\mu(\alpha|x-t(v-v_*)|^2+\beta|v_*|^2)}dv_*\Big)dv dx\cr
&\leq \frac{(a_M)^{p\mu}}{(t+1)^{3+\gamma}}\Big[~\frac{2\pi}{\gamma+3}+\sqrt{\Big(\frac{\pi}{\alpha
p\mu}\Big)^3}+\sqrt{\Big(\frac{\pi}{\beta p\mu}\Big)^3}~\Big]\|(f^{\sharp})^{\mu}(t)\|^p_p\cr
&\equiv (a_M)^{p\mu}\frac{C_{{\mathcal N}_{1B}}}{(t+1)^{3+\gamma}}\|(f^{\sharp})^{\mu}(t)\|^p_p.
\end{aligned}
\end{align}
Substituting (\ref{N_1A}) and (\ref{N_1B}) into (\ref{Holder}), we
obtain
\begin{eqnarray*}
{\mathcal N}_{1}&\leq& (a_M)^{\mu}\big(C_{{\mathcal N}_{1A}}\big)^{\frac{p}{p-1}}\big(C_{{\mathcal N}_{1B}}\big)^{\frac{1}{p}}
(t+1)^{-(3+\gamma)(\frac{p-1}{p}+\frac{1}{p})}\int_{\bbs^2_+}b(\theta)\|(f^{\sharp})^{\mu}\|^p_pd\omega\\
&\leq& \big(C_{{\mathcal N}_{1A}}\big)^{\frac{p}{p-1}}\big(C_{{\mathcal
N}_{1B}}\big)^{\frac{1}{p}}\frac{(a_M)^{\mu}B_{\gamma}}{(t+1)^{3+\gamma}}\|(f^{\sharp})^{\mu}\|^p_p.
\end{eqnarray*}
We set
\[
C_{{\mathcal N}_1}(\alpha,\beta,
\mu)=(a_M)^{\mu}\big(C_{\mathcal{N}_{1A}}\big)^{\frac{p}{p-1}}\big(C_{\mathcal{N}_{1B}}\big)^{\frac{1}{p}}B_{\gamma}
\]
to complete the proof.
\end{proof}
We now substitute the estimate (\ref{I_II}) of Lemma \ref{MainLemmaforSoftPotential} into
(\ref{LPBE}) to obtain
\begin{align}
\begin{aligned}
\|(f^{\sharp})^{\mu}(t)\|^p_{p}&\leq\|(f^{\sharp}_{0})^{\mu}\|^p_{p}
+\mu pD_{\mu, p}\int^{t}_0\frac{1}{(t+1)^{3+\gamma}}\|(f^{\sharp})^{\mu}(t)\|^p_p~dt,
\end{aligned}
\end{align}
where
\[
\displaystyle
D_{\mu, p}=~a_{M}^{\mu}C_{\mathcal{N}_1}B_{\gamma}\Big(\frac{a_{M}^2}{a_{m}}\Big)^{1-\mu}.
\]
By Grownwall's lemma, this yields
\begin{eqnarray*}
\|f(t)\|^{\mu p}_{\mu p}\leq e^{2\mu p D_{\mu, p}}\|f_{0}\|^{\mu p}_{\mu p}
\end{eqnarray*}
or, equivalently,
\begin{eqnarray*}
\|f(t)\|_{\mu p}\leq e^{D_{\mu, p}}\|f_{0}\|_{\mu p}.
\end{eqnarray*}
We now adjust $\mu$ and $p$ to complete the proof. For this, assume we are given a Lebesque exponent $P\in (0,\infty)$.
We divide the argument into the following two cases:\newline
(i) $P\in [1,\infty)$: we fix $\mu$ between $0$ and $1$ and set $p=\frac{P}{\mu}$ to obtain
\begin{eqnarray*}
\|f(t)\|_{P}\leq e^{D_{\mu, p}}\|f_0\|_{P}.
\end{eqnarray*}
Letting $P \rightarrow \infty$, we get
\begin{eqnarray*}
\|f(t)\|_{\infty}\leq e^{D_{\mu, \infty}}\|f_0\|_{\infty}.
\end{eqnarray*}
Here $D_{\mu, \infty}$ denotes
\[
D_{\mu, \infty}=\lim_{P\rightarrow \infty}D_{\mu, \frac{P}{\mu}}<\infty.
\]
(ii) $P\in (0,1)$: we fix $p$ in $[1,\infty)$  and set $\mu=\frac{P}{p}$ to obtain
\begin{eqnarray*}
\|f(t)\|_{P}\leq e^{D_{\mu, p}}\|f_0\|_{P}.
\end{eqnarray*}
Note that in both cases $0<\mu<1$ and $1\leq p<\infty$ hold, which guarantee the relevance
of the preceding argument.
%
%
%
%
\section{The proof of theorem 1.1 $(-2<\gamma\leq 1)$}\setcounter{equation}{0}
If the intermolecular force is governed by  hard potentials $(0< \gamma \leq 1)$,
most of the crucial estimates in the previous sections are
not relevant anymore due to the unboundedness of the collision kernel at infinity.  We overcome this difficulty
by incorporating the idea of Cho and Yu \cite{C-Y} into the reformulated setting.
More precisely, we introduce a maximal distribution function $\sup_t f^{\sharp}$ and interchange the order of
integration between time and velocity, to resolve the singularity of the collision kernel at infinity.
We mention that the proof of this section is not restricted to the hard potential case
and can be applied to the soft potential case either for $-2<\gamma\leq 1$.
We again start from the following inequality:
\begin{eqnarray}
\frac{\partial (f^{\sharp})^{\mu}}{\partial t} \leq \mu
\big(\frac{a_{M}^2}{a_{m}}\big)^{1-\mu}\int_{\bbr^3 \times \bbs_+^2}
A_{\mu,\alpha,\beta}(v-v_*)b(\theta)\big(f^{\prime\sharp}
f^{\prime\sharp}_{*}\big)^{\mu}d\omega
dv_*.\label{Recall_BE2}
\end{eqnarray}
We integrate from $0$ to $t$ to obtain
\begin{align}
\begin{aligned}
(f^{\sharp}(t))^{\mu}&\leq(f_0^{\sharp})^{\mu}+\mu\big(\frac{a_M^2}{a_m}\big)^{1-\mu}\int^{t}_{0}\int_{\bbr^9
\times \bbs_+^2}A_{\mu,\alpha,\beta}b(\theta)\big(f^{\prime\sharp}
f_*^{\prime\sharp}\big)^{\mu}d\omega dv_*dt\\
&\leq(f_0^{\sharp})^{\mu}+\mu\big(\frac{a_M^2}{a_m}\big)^{1-\mu}\int^{\infty}_{0}\int_{\bbr^9
\times \bbs_+^2}A_{\mu,\alpha,\beta}b(\theta)\big(f^{\prime\sharp}
f_*^{\prime\sharp}\big)^{\mu}d\omega dv_*dt.\\
\end{aligned}
\end{align}
We then take the supremum in time to obtain
\begin{align}
\begin{aligned}\label{supf}
\sup_t(f^{\sharp})^{\mu}&\leq(f_0^{\sharp})^{\mu}+
\mu\big(\frac{a_{M}^2}{a_{m}}\big)^{1-\mu}\!\!\!\int^{\infty}_{0}
\!\!\int_{\bbr^3\times \bbs_+^2}\!\!\!\!A_{\mu,\alpha,\beta}b(\theta)(f^{\sharp}f_*^{\sharp})^{\mu}d\omega dv_*dt.
\end{aligned}
\end{align}
The reason why we do this will be clear in Lemma \ref{MainLemmaforHardPotentialSmall}.
We now take $L^p$ norm directly on both sides,
instead of multiplying $pf^{\sharp p-1}$ to both sides  of \eqref{supf} and integrating with
respect to $(x,v)$ as in the previous sections, to see
\begin{align}
\begin{aligned}\label{LpBE2}
\|\sup_t(f^{\sharp})^{\mu}\|_p&\leq\|(f_0^{\sharp})^{\mu}\|_p\cr
&+\mu\big(\frac{a_{M}^2}{a_{m}}\big)^{1-\mu}
\Big\|\int^{\infty}_{0}\int_{\bbr^3\times \bbs_+^2}A_{\mu,\alpha,\beta}b(\theta)
\big(f^{\prime\sharp}f_*^{\prime\sharp}\big)^{\mu}d\omega dv_*dt\Big\|_p\cr
&\equiv\|(f_0^{\sharp})^{\mu}\|_p+\mu\big(\frac{a_{M}^2}{a_{m}}\big)^{1-\mu} {\mathcal N}_2.
\end{aligned}
\end{align}
In the following lemma, we estimate ${\mathcal N}_2$. Note that ${\mathcal N}_2$ is bounded
by the $L^p$-norm of $\sup_t (f^{\sharp})$.
%
%
%
%
\begin{lemma}\label{MainLemmaforHardPotentialSmall} Let $\gamma\in (-2,1]$.
Then for $p\geq 1$ and $\mu \in (0,1)$, we have
\begin{eqnarray}\label{N_2}
{\mathcal N}_2 \leq C_{\mu,p}\|\big(\sup_{t}f^{\sharp}\big)^{\mu}\|_p
\end{eqnarray}
for some positive constant $C_{\mu,p}$
\end{lemma}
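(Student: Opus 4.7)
The plan is to establish a \emph{pointwise} (in $(x,v)$) bound
\begin{equation*}
\int_0^\infty \int_{\bbr^3 \times \bbs_+^2} A_{\mu,\alpha,\beta}(v-v_*) b(\theta) (f'^\sharp f_*'^\sharp)^\mu d\omega dv_* dt \leq C_{\mu,p}\bigl(\sup_t f^\sharp\bigr)^\mu(x, v),
\end{equation*}
from which $(\ref{N_2})$ follows immediately upon taking $L^p(dxdv)$-norms. Conceptually, the argument has three steps: dominate $(f'^\sharp f_*'^\sharp)^\mu$ by an explicit Gaussian using the upper half of $({\mathcal A}2)$; perform the interchange of time and velocity integrations, doing $t$ first via Lemma \ref{Lemma1} to compensate for the singular prefactor $|v-v_*|^\gamma$; finally, absorb the resulting Maxwellian into $(\sup_t f^\sharp)^\mu$ using the lower half of $({\mathcal A}2)$.

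For the first step, I apply the pointwise bound $f^\sharp \leq a_M {\mathcal M}_{\alpha,\beta}$ at the two points arising from $f'^\sharp$ and $f_*'^\sharp$. Collision invariance $|v'|^2+|v_*'|^2=|v|^2+|v_*|^2$, $(v-v')+(v-v_*')=v-v_*$, together with $|v-v'|^2+|v-v_*'|^2=|v-v_*|^2$, yields the parallelogram-type identity
\begin{equation*}
|x\pm t(v-v')|^2+|x\pm t(v-v_*')|^2=|x|^2+|x\pm t(v-v_*)|^2,
\end{equation*}
so that $(f'^\sharp f_*'^\sharp)^\mu \leq a_M^{2\mu}e^{-\mu(\alpha|x|^2+\beta|v|^2)}e^{-\mu\alpha|x\pm t(v-v_*)|^2-\mu\beta|v_*|^2}$. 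Multiplying this by $A_{\mu,\alpha,\beta}$ produces an integrand of the form $a_M^{2\mu}|v-v_*|^\gamma e^{-\mu(\alpha|x|^2+\beta|v|^2)} e^{-c_1|x\pm t(v-v_*)|^2-c_2|v_*|^2}$ with positive constants $c_1,c_2$ depending on $\mu,\alpha,\beta$. Interchanging $dt$ and $dv_*$ and applying Lemma \ref{Lemma1} then gives $\int_0^\infty e^{-c_1|x\pm t(v-v_*)|^2}dt\leq \sqrt{\pi/c_1}/|v-v_*|$; integrating $\omega$ contributes the constant $B_\gamma$ by $({\mathcal A}1)$; and the remaining $dv_*$-integral is controlled by $\sup_v \int_{\bbr^3}|v-v_*|^{\gamma-1}e^{-c_2|v_*|^2}dv_*$. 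Collecting gives the pointwise estimate $C_0 e^{-\mu(\alpha|x|^2+\beta|v|^2)}$. The lower bound of $({\mathcal A}2)$ then yields $(\sup_t f^\sharp)^\mu \geq a_m^\mu e^{-\mu(\alpha|x|^2+\beta|v|^2)}$, which absorbs the Gaussian and closes the estimate with $C_{\mu,p}=C_0/a_m^\mu$.

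The main obstacle is the uniform-in-$v$ bound
\begin{equation*}
\sup_{v\in\bbr^3}\int_{\bbr^3}|v-v_*|^{\gamma-1}e^{-c_2|v_*|^2}dv_* < \infty.
\end{equation*}
This is exactly where the range $\gamma\in(-2,1]$ becomes indispensable: $\gamma>-2$ guarantees local integrability of $|v-v_*|^{\gamma-1}$ near $v_*=v$, while $\gamma\leq 1$ prevents $|v-v_*|^{\gamma-1}$ from growing at $|v_*|\to\infty$, so the Gaussian weight closes the integral uniformly in $v$. A standard dichotomy into $|v_*|\leq|v|/2$ and $|v_*|>|v|/2$ in the large-$|v|$ regime handles this step, after which the rest of the proof reduces to a careful accounting of the constants.
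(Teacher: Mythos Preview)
Your argument is correct and in fact yields a stronger \emph{pointwise} estimate than the paper's $L^p$ bound. The route, however, is genuinely different. The paper does \emph{not} bound both factors $f'^\sharp$ and $f_*'^\sharp$ by the Maxwellian; instead it applies H\"{o}lder's inequality in $(t,v_*)$ to split $\mathcal{N}_2$ into a pure-weight piece $\mathcal{N}_{2A}$ (controlled by Lemma~\ref{Lemma1} as you do) and a piece $\mathcal{N}_{2B}$ carrying $(f'^\sharp f_*'^\sharp)^{p\mu}$. After the collisional change of variables, one factor of $f^\sharp$ is dominated by $\sup_t f^\sharp$ and the other by the upper Maxwellian bound, so that $\|(\sup_t f^\sharp)^\mu\|_p$ emerges directly from the integral rather than being re-inserted via the lower bound.

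The practical difference is this: your proof uses the \emph{lower} half of $(\mathcal{A}2)$ in the last step, while the paper's proof uses only the \emph{upper} half. For the lemma as stated this is immaterial, and your approach is arguably cleaner (no H\"{o}lder split, and the constant $C_{\mu,p}$ you obtain is actually $p$-independent). But the paper's version transfers verbatim to Section~5, where only the one-sided assumption $(\mathcal{A}2)'$ is available and the difference $f-\bar f$ has no lower Maxwellian bound; your argument would not apply there.
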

\begin{proof}
By H\"{o}lder inequality, we have
\begin{align}
\begin{aligned}\label{N_21}
{\mathcal N}_2&\leq\Big\|\int_{\bbs^2_+}b(\theta)\Big(\underbrace{\int^{\infty}_0\int_{\bbr^3}|v-v_*|^{\gamma}
 e^{-\frac{p(1-\mu)}{p-1}(\alpha|x+t(v-v_*)|^2+\beta|v_*|^2)}dv_*ds}_{{\mathcal N}_{2A}}\Big)^{\frac{p-1}{p}}\cr
 &\times\Big(\int^{\infty}_{0}\int_{\bbr^3}|v-v_*|^{\gamma}\big(f^{\prime\sharp})^{p\mu}
  (f_*^{\prime\sharp}\big)^{p\mu}dv_*ds\Big)^{\frac{1}{p}}d\omega\Big\|_{L^{p}(dx,dv)}.\cr
\end{aligned}
\end{align}
We use Lemma 2.1 and 2.2 to see
\begin{eqnarray*}
{\mathcal N}_{2A}&\equiv&\int^{\infty}_{0}\int_{\bbr^3}|v-v_*|^{\gamma}e^{-\frac{p(1-\mu)}{p-1}(\alpha|x+t(v-v_*)|^2+\beta|v_*|^2)}dtdv_*\\
&=&\int_{\bbr^3}|v-v_*|^{\gamma}e^{-\frac{p(1-\mu)}{p-1}\beta|v_*|^2}
\Big(\int^{\infty}_{0} e^{-\frac{p(1-\mu)}{p-1}\alpha|x+t(v-v_*)|^2}dt\Big)dv_*\cr
&\leq&\sqrt{\frac{\pi(p-1)}{\alpha p(1-\mu)}}\int_{\bbr^3}|v-v_*|^{\gamma-1}e^{-\frac{p(1-\mu)}{p-1}\beta|v_*|^2}dv_*\cr
&\leq&\sqrt{\frac{\pi(p-1)}{\alpha p(1-\mu)}}~\Big(\int_{|v_*|\leq 1}|v-v_*|^{\gamma-1}dv_*+
\int_{|v_*|>1}e^{-\frac{p(1-\mu)}{p-1}\beta|v_*|^2}dv_*\Big)\cr
&\leq&\sqrt{\frac{\pi(p-1)}{\alpha p(1-\mu)}}~\Big(~\frac{2\pi}{\gamma+2}+\sqrt{\Big(\frac{p-1}{\beta p(1-\mu)}\Big)^3}~\Big)\cr
&\equiv&C_{{\mathcal N}_{2A}}.
\end{eqnarray*}
Note that we performed integration in time first before the velocity integration.
We plug the above estimate of ${\mathcal N}_{2A}$ into (\ref{N_21}) to obtain
\begin{eqnarray*}
&&{\mathcal N}_{2}\leq({C_{{\mathcal
N}_{2A}}})^{\frac{p-1}{p}}\Big\|\Big(\int^{\infty}_0\int_{\bbr^3}|v-v_*|^{\gamma}
(f^{\sharp}(x-t(v-v^{\prime}),v^{\prime},t))^{p\mu}\cr
&&\hspace{1cm}\times(f^{\sharp}(x-t(v-v^{\prime}_*),v^{\prime}_*,t))^{p\mu}dv_*dt\Big)^{\frac{1}{p}}\Big\|_{L^p(dx,dv)}.
\end{eqnarray*}
Applying a series of changes of variables: $x+tv\rightarrow x$, $(v^{\prime}, v^{\prime}_*)\rightarrow(v, v_*)$
and $x\rightarrow x+tv$ gives
\begin{eqnarray*}
&&{\mathcal N}_{2}\leq(C_{{\mathcal N}_{2A}})^{\frac{p-1}{p}}\Big[\int^{\infty}_0\int_{R^9}|v-v_*|^{\gamma}(f^{\sharp}(x,v,t))^{p\mu}\cr
&&\hspace{1cm}\times(f^{\sharp}(x-t(v-v_*),v_*,t))^{p\mu}dx dvdv_*dt\Big]^{\frac{1}{p}}.
\end{eqnarray*}
We now introduce the maximal distribution $\sup_tf^{\sharp}(x,v)$ as follows
\begin{eqnarray*}
{\mathcal N}^p_{2}
&\leq&a_M^{p\mu}(C_{{\mathcal N}_{2A}})^{p-1}\int_{\bbr^6}\big(\sup_{t}(f^{\sharp}(x,v))^{p\mu}\big)\cr
&&\times\Big(\int_{\bbr^3}\int^{\infty}_{0}|v-v|^{\gamma}e^{-p\mu(\alpha|x-t(v-v_*)|^2+\beta|v_*|^2)}dtdv_*\Big)dv dx\cr
&\leq&a_M^{p\mu}(C_{{\mathcal N}_{2A}})^{p-1}\sqrt{\frac{\pi}{\alpha\mu
p}}\int_{\bbr^6}\big(\sup_{t}(f^{\sharp})^{p\mu}\big)
\Big(\int_{\bbr^3}|v-v|^{\gamma-1}e^{-p\mu\beta|v_*|^2}dv_*\Big)dv dx\cr
&\leq&a_M^{p\mu}(C_{{\mathcal N}_{2A}})^{p-1}\sqrt{\frac{\pi}{\alpha\mu
p}}\Big(~\frac{2\pi}{\gamma+2}+\sqrt{\Big(\frac{1}{\beta
p\mu}\Big)^3}~\Big) \|\sup_{t}(f^{\sharp})^{\mu}\|^p_p,
\end{eqnarray*}
where we used
\begin{eqnarray*}
f^{\sharp}(x,v,t)&\leq& \sup_t(f^{\sharp})(x,v)\quad\mbox{ and }\cr
f^{\sharp}(x,v_*,t)&\leq& a_Me^{-\alpha|x-t(v-v_*)|^2-\beta|v_*|^2}.
\end{eqnarray*}
Finally we put
\[
C_{\mu,p}\equiv  a_{M}^{\mu}(C_{{\mathcal N}_{2A}})^{\frac{p-1}{p}}
\Big[\sqrt{\frac{\pi}{\alpha\mu p}}\Big(~\frac{2\pi}{\gamma+2}+\sqrt{\Big(\frac{1}{\beta p\mu}\Big)^3}~\Big)\Big]^{\frac{1}{p}}
\]
to obtain the desired result.
\end{proof}
We now go back to the proof of the main theorem of this section. Substituting \eqref{N_2} into
\eqref{LpBE2} and recalling
\[
\|(f^{\sharp})^{\mu}\|_{p}=\|(f^{\sharp})\|^{\mu}_{\mu p},
\]
we have
\begin{eqnarray}\label{LpHard}
\|\sup_{t}(f^{\sharp})\|^{\mu}_{\mu p}\leq\|(f_0^{\sharp})\|^{\mu}_{\mu p}+
\bar C_{\mu,p}\|\sup_t(f^{\sharp})\|^{\mu}_{\mu p},
\end{eqnarray}
where
\begin{eqnarray*}
{\bar C}_{\mu,p}\equiv\mu a_{M}^{\mu}(C_{{\mathcal N}_{2B}})^{\frac{p-1}{p}}\Big(\frac{a_{M}^2}{a_{m}}\Big)^{1-\mu}
\Big[\sqrt{\frac{\pi}{\alpha\mu p}}\Big(~\frac{2\pi}{\gamma+2}+\sqrt{\Big(\frac{1}{\beta p\mu}\Big)^3}~\Big)\Big]^{\frac{1}{p}}.
\end{eqnarray*}
As in the previous section, we first fix $\mu p=P$  for a given Lebesgue exponent $0<P<\infty$.
We then observe that
\begin{eqnarray*}
{\bar C}_{\mu,p}\leq\mu\mathcal{O}(1)\Big[\sqrt{\frac{1}{P}}\Big(~1+\sqrt{\Big(\frac{1}{P}\Big)^3}~\Big)\Big]^{\frac{\mu}{P}},
\end{eqnarray*}
where we used the fact that
$(C_{{\mathcal N}_{2A}})^{\frac{p-1}{p}}$ is uniformly bounded for $p\geq1$, $0<\mu<1$,
and
\begin{eqnarray*}
a^{\mu}_M\Big(\frac{a_M^2}{a_m}\Big)^{1-\mu}=a_M\Big(\frac{a_M}{a_m}\Big)^{1-\mu}<\frac{a^2_M}{a_m}.
\end{eqnarray*}
Therefore, we can take $\mu$ sufficiently small (with $P$ fixed) such that ${\bar C}_{\mu,p}<1$,
which gives from (\ref{LpHard})
\begin{eqnarray*}
\|(f^{\sharp})(t)\|^{\mu}_{P}\leq\|\sup_t(f^{\sharp})\|^{\mu}_{P}\leq
\frac{1}{1-\bar{C}_{\mu,p}}\|(f_0^{\sharp})\|^{\mu}_P.
\end{eqnarray*}
This implies the desired result.
\section{On the stability of $f-\bar f$}\setcounter{equation}{0}
Let $f$, $\bar f$ be two mild solutions of (\ref{main}) which satisfy the upper bound estimate
(but not necessarily lower bound estimate) of the main assumption $({\mathcal A}2)$:
\[
({\mathcal A2})^{\prime}:\quad
0\leq f^{\sharp}(x,v,t),~\bar{f}^{\sharp}(x,v,t)\leq a_{M}{\mathcal M}_{\alpha,\beta}(x,v),\quad
\mbox{ a.e.} \hspace{0.3cm} (x,v),
\] for some strictly positive constant $a_{M}$.
 Since the difference $f-\bar f$
does not satisfies the lower bound estimate of $({\mathcal A}2)$ in general, the arguments given in section 3 and 4 are
not directly applicable to the difference of two distribution functions. One way to circumvent this problem is to consider
$g^{\sharp}(x,v,t)\equiv {\mathcal
M}^{-1}_{\alpha,\beta}f^{\sharp}(x,v,t)$ instead of $f^{\sharp}(x,v,t)$. Substituting this into
\eqref{main}, we obtain
\begin{eqnarray}
&&\frac{\partial g^{\sharp}}{\partial t}=  \int_{\bbr^3 \times
\bbs_+^2} A_{\alpha,\beta}(v-v_*, \omega) ( g^{\prime\sharp}
g_*^{\prime\sharp}
- g^{\sharp}g^{\sharp}_* )d\omega dv_*,\label{Eq_G1}\\
&&\frac{\partial \bar g^{\sharp}}{\partial t}=  \int_{\bbr^3 \times
\bbs_+^2} A_{\alpha,\beta}(v-v_*, \omega) ( \bar
g^{\prime\sharp}\bar g_*^{\prime\sharp} -\bar g^{\sharp}\bar
g^{\sharp}_* )d\omega dv_*,\label{Eq_G2}
\end{eqnarray}
where $A_{\alpha,\beta}$ denotes the regularized collision kernel as before:
\[
A_{\alpha,\beta}(v-v_*, \omega)=|v-v_*|^{\gamma}e^{-\alpha|x-(v-v_*)t|^2-\beta|v_*|^2}.
\]
We subtract (\ref{Eq_G2}) from
(\ref{Eq_G1}) and multiply $sgn(f^{\sharp}-\bar f^{\sharp})$ to both
sides to see
\begin{eqnarray*}
&&\frac{\partial G^{\sharp}}{\partial t}\leq  \int_{\bbr^3 \times
\bbs_+^2} A_{\alpha,\beta}(v-v_*, \omega) ( G^{\prime\sharp}
D_*^{\prime\sharp} +D^{\prime\sharp} G_*^{\prime\sharp} +
G^{\sharp} D^{\sharp}_* + D^{\sharp} G^{\sharp}_*)d\omega dv_*.
\end{eqnarray*}
where $G=|g-\bar g|$ and $D=|g+\bar g|$. Then the exactly same
arguments as in the previous sections yield
\begin{align}
\begin{aligned}
&\|G\|_p\leq C_p\|G_0\|_p,\hspace{0.5cm}(-3< \gamma \leq 1),
\end{aligned}\label{LpforG}
\end{align}
where $\theta=1$ for sufficiently small $a_M$. We now introduce the following
notation for simplicity.
\[
||f(t)||_{L^p_{\mathcal{M}}}\equiv\Big\{\int_{\bbr^6}\big(f^{\sharp}(x,v,t)\mathcal{M}^{-1}_{\alpha,\beta}\big)^pdxdv\Big\}^{\frac{1}{p}},
\]
then \eqref{LpforG} leads to the following theorems.
%
%
\begin{theorem}
Suppose that main assumption $({\mathcal A}1)$ holds with
$-3<\gamma \leq 1$. Let $f$ and $\bar f$ be mild solutions
satisfying  $({\mathcal A}2)^{\prime}$ corresponding to
initial data $f_0, \bar f_0$ respectively. Then we have
\[
||f(t)-\bar f(t)||_{L^p_{\mathcal{M}}}  \leq G_p ||f_0-\bar
f_0||_{L^p_{\mathcal{M}}}, \qquad t \geq 0.
\]
\end{theorem}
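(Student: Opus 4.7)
The plan is to deduce this theorem as a direct corollary of the estimate $\|G\|_p\leq C_p\|G_0\|_p$ sketched just before the statement, once the weighted-norm identification is made and the standing arguments of Sections 3 and 4 are checked to carry over to the linear-in-$G$ inequality satisfied by $G^{\sharp}=|g^{\sharp}-\bar g^{\sharp}|$.

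First I would observe that, by the definition $g^{\sharp}\equiv\mathcal{M}^{-1}_{\alpha,\beta}f^{\sharp}$ (and similarly for $\bar g$), one has the pointwise identity $G^{\sharp}=\mathcal{M}^{-1}_{\alpha,\beta}|f^{\sharp}-\bar f^{\sharp}|$, so that
\[
\|G(t)\|_p=\|f(t)-\bar f(t)\|_{L^p_{\mathcal{M}}}
\]
holds identically, and likewise for the initial data. Consequently the conclusion reduces to verifying \eqref{LpforG}.

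Next I would revisit the pointwise differential inequality
\[
\frac{\partial G^{\sharp}}{\partial t}\leq\int_{\bbr^3\times\bbs^2_+}A_{\alpha,\beta}(v-v_*,\omega)\bigl(G'^{\sharp}D_*'^{\sharp}+D'^{\sharp}G_*'^{\sharp}+G^{\sharp}D^{\sharp}_*+D^{\sharp}G^{\sharp}_*\bigr)\,d\omega\,dv_*
\]
derived above, and exploit that $(\mathcal{A}2)'$ forces the uniform pointwise bound $D^{\sharp}\leq 2 a_M$. Each of the four terms on the right is then linear in $G$ with an $L^{\infty}$ prefactor against the already-regularized kernel $A_{\alpha,\beta}$. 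Running the H\"older plus Lemma \ref{Lemma2} estimate on each of these four contributions exactly as in the single-solution case recovers the $(t+1)^{-(3+\gamma)}$ time-decay used in Section 3 for $-3<\gamma\leq 0$ (closing the inequality by Gronwall) and in Section 4 for $0<\gamma\leq 1$ (closing it by the supremum-in-time device together with small-$\mu$ absorption). In both regimes the resulting constant depends only on $p$, $\mu$, $a_M$, $\alpha$, $\beta$, $\gamma$ and not on $t$.

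The main obstacle I anticipate is the bookkeeping for the post-collisional terms $G'^{\sharp}D_*'^{\sharp}$ and $D'^{\sharp}G_*'^{\sharp}$: one must apply the inverse change of variables $(v',v_*')\to(v,v_*)$ used previously in the estimate of $\mathcal{N}_{1B}$ \emph{after} isolating the $D$-factor, so that the uniform bound $D\leq 2 a_M$ is invoked on the correct (post-change, pre-collisional) velocity argument rather than swallowed into an uncontrolled post-collisional expression. Once that bookkeeping is done, all four contributions are controlled by $\|G\|_p$ with the same time-decay factor, yielding $\|G(t)\|_p\leq C_p\|G_0\|_p$; the norm identification above then closes the theorem.
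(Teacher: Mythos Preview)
Your overall strategy coincides with the paper's: introduce $g^{\sharp}=\mathcal{M}^{-1}_{\alpha,\beta}f^{\sharp}$, derive the linear differential inequality for $G^{\sharp}=|g^{\sharp}-\bar g^{\sharp}|$ with the already-regularized kernel $A_{\alpha,\beta}$, bound $D^{\sharp}\leq 2a_M$, and then repeat the $L^p$ machinery of Sections~3--4. The norm identification $\|G(t)\|_p=\|f(t)-\bar f(t)\|_{L^p_{\mathcal M}}$ is exactly what the paper uses to pass from \eqref{LpforG} to the theorem.

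There is one point you should revisit. For the hard-potential range you write that the argument closes ``by the supremum-in-time device together with small-$\mu$ absorption.'' But in the Section~5 formulation there is no parameter $\mu$ at all: the regularization of the kernel now comes from the substitution $g=\mathcal{M}^{-1}f$, not from dividing by $f^{1-\mu}$, and the latter trick cannot be applied to $G$ since $G$ has no pointwise lower bound (so $(G^{\sharp})^{\mu-1}$ is uncontrolled on the cross terms $D^{\sharp}G_*^{\sharp}$ and $D'^{\sharp}G_*'^{\sharp}$). Hence the absorption constant in the analogue of \eqref{LpHard} is proportional to $a_M$, not to a free $\mu$ that you may shrink. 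This is precisely what the paper's parenthetical remark ``$\theta=1$ for sufficiently small $a_M$'' following \eqref{LpforG} is signaling: in the range where the supremum-in-time argument is needed, closing requires smallness of $a_M$, not of $\mu$. Your bookkeeping for the post-collisional terms is otherwise on target.
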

\noindent{\bf Acknowledgement} The author would like to thank anonymous reviewrs
for their valuable comments and suggestions.
This work was supported by BK21 Project.

\end{document}